\documentclass[12pt]{amsart}

\usepackage[T1]{fontenc}
\usepackage{lmodern}
\usepackage{amsmath, amssymb, amsthm}
\usepackage{geometry}
\usepackage{hyperref}
\usepackage{enumerate}
\usepackage{tikz}
\usepackage{pgfplots}
\pgfplotsset{compat=1.18}
\usepackage{booktabs}
\usepackage{comment}
\usepackage{float}

\newtheorem{theorem}{Theorem}[section]
\newtheorem{prop}[theorem]{Proposition}
\newtheorem{lemma}[theorem]{Lemma}

\newtheorem{remark}[theorem]{Remark}
\newtheorem{corollary}[theorem]{Corollary}

\begin{document}

\title[Triangles Associated with $D(4)$-triples]{Euclidean Triangles Associated with Diophantine $D(4)$-triples}

\author[M. Bliznac Trebje\v{s}anin]{Marija {Bliznac Trebje\v{s}anin}}
\address{University of Split, Faculty of Science, Ru\dj{}era Bo\v{s}kovi\'{c}a 33,
21000 Split, Croatia}
\email{marbli@pmfst.hr}

\author[J. Ple\v{s}tina]{Jelena {Ple\v{s}tina}*}
\thanks{*Corresponding author.}
\address{University of Split, Faculty of Science, Ru\dj{}era Bo\v{s}kovi\'{c}a 33,
21000 Split, Croatia}
\email{jplestina@pmfst.hr}

\begin{abstract}
In this paper we study Euclidean triangles associated with Diophantine $D(4)$-triples. For a $D(4)$-triple $\{a,b,c\}$, $a<b<c$, we prove that the lengths
$r=\sqrt{ab+4}$, $s=\sqrt{ac+4}$ and $t=\sqrt{bc+4}$ associated with this $D(4)$-triple form a non-degenerate triangle if and only if the triple is regular and $r>b-a$. For a fixed shortest side $r$, we describe all triangles associated with $D(4)$-triples in terms of the factorisations of $r^2-4$, give a divisor criterion for their number, and determine their angle type. We also prove that two similar triangles associated with $D(4)$-triples are necessarily congruent. Finally, for the family of $D(4)$-triples $\{r-2,r+2,4r\}$, we obtain a Pell-equation characterisation of the Heronian triangles associated with these $D(4)$-triples.
\end{abstract}

\maketitle 

\noindent{\it 2020 {Mathematics Subject Classification:}} 51M04, 51M25, 11D09, 11B37.

\noindent{\it Keywords}: {Euclidean triangle, integer triangle, Heronian triangle, Diophantine $D(4)$-triple, Pell equation.}

\section{Introduction}

A $D(4)$-triple is a set of three distinct positive integers $\{a,b,c\}$ such that the product of any two distinct elements, increased by $4$, is a perfect square. We order its elements as $a<b<c$ and define
\begin{equation}
r=\sqrt{ab+4},\qquad
s=\sqrt{ac+4},\qquad
t=\sqrt{bc+4}.
\label{eq:lengths}
\end{equation}
Then $r,s,t$ are positive integers such that $r<s<t$. We call $r,s,t$ the \emph{lengths associated with the $D(4)$-triple $\{a,b,c\}$}.

We determine when the lengths $r,s,t$ associated with the $D(4)$-triple $\{a,b,c\}$ are the side lengths of a non-degenerate Euclidean triangle. Since $r<s<t$, this is
equivalent to
\begin{equation}
r+s>t.
\label{eq:triangle-ineq}
\end{equation}
When \eqref{eq:triangle-ineq} holds, we call the triangle with side lengths $r,s,t$ the \emph{triangle associated with the $D(4)$-triple $\{a,b,c\}$}.
A triangle with integer side lengths is called an
\emph{integer triangle}. An integer triangle whose area is also an integer is called a \emph{Heronian triangle}. Hence, every triangle associated with a $D(4)$-triple is an integer triangle.
The equality $r+s=t$ gives a degenerate triangle, whereas if $r+s<t$, $r,s,t$ do not form a triangle. All three cases occur for $D(4)$-triples, as shown in Table~\ref{tab:three-cases}. 
\begin{table}[H]
\centering
\begin{tabular}{c c c c}
\hline
$D(4)$-triple & $(r,s,t)$ & Relation & Case \\
\hline
$\{3,4,15\}$   & $(4,7,8)$   & $4+7>8$   & non-degenerate triangle \\
$\{3,4,224\}$  & $(4,26,30)$ & $4+26=30$ & degenerate triangle \\
$\{3,15,224\}$ & $(7,26,58)$ & $7+26<58$ & no triangle \\
\hline
\end{tabular}
\caption{Examples of the three possible cases.}
\label{tab:three-cases}
\end{table}

A set of two distinct positive integers $\{a,b\}$ is called a
\emph{$D(4)$-pair} if $ab+4$ is a perfect square. If $\{a,b\}$ is a $D(4)$-pair and
$r=\sqrt{ab+4}$, then
\begin{equation}
c=a+b+2r
\label{eq:regular}
\end{equation}
is the smallest integer $c>b$ that extends the pair to a $D(4)$-triple \cite{Bliznac2021}. A $D(4)$-triple satisfying
\eqref{eq:regular} is called \emph{regular}
\cite[Definition~2]{BliznacFilipin2019}.
For a regular $D(4)$-triple, direct calculation gives
\begin{equation}
 s=a+r,\qquad t=b+r.
 \label{eq:regular-identities}
\end{equation}

Arithmetic properties and extensions of $D(4)$-triples
have been studied extensively \cite{Bliznac2021, BliznacFilipin2019, DujellaRamasamy2005,Filipin2009}.

We prove that \eqref{eq:triangle-ineq} holds exactly for regular
$D(4)$-triples satisfying $r>b-a$. We then describe the triangles
associated with such $D(4)$-triples through the identity
$$
(s-r)(t-r)=r^2-4.
$$
For a fixed $r$, the possible side lengths $s$ and $t$ are determined
by suitable factor pairs of $r^2-4$. In particular, the factorisation $r^2-4=(r-2)(r+2)$
satisfies the condition $b-a<r$ precisely for $r\geq5$ and gives the $D(4)$-triple $\{r-2,r+2,4r\}$. 
The family $\{r-2,r+2,4r\}$ is already known in the theory of
$D(4)$-tuples. For every $r\geq3$, $\{r-2,r+2,4r,4r(r^2-1)\}$ is a $D(4)$-quadruple \cite{Dujella1993}. Its first three
elements form the regular $D(4)$-triple $\{r-2,r+2,4r\}$. It is proved that $4r(r^2-1)$ is the unique positive integer $d$ for which
$\{r-2,r+2,4r,d\}$ is a $D(4)$-quadruple \cite{Fujita2006}.
For $r\geq5$, the triangle associated with this $D(4)$-triple has side lengths $(r,2r-2,2r+2)$, so its two longer sides differ by $4$. Sastry \cite{Sastry2001} considered the more general problem of Heronian triangles in which two sides differ by a prescribed integer and related this problem to Pell-type equations. 
Heronian triangles with side lengths $(r,2r-2,2r+2)$ are recorded in OEIS entry A272365 \cite{Jackson2016}, whose terms give their longest side lengths. We give a self-contained proof of the characterisation of all such triangles using a Pell equation.

\section{Preliminaries}\label{sec:preliminaries}

We recall some properties of $D(4)$-triples that will be used in the
proofs. Let $\{a,b,c\}$, $a<b<c$, be a $D(4)$-triple.
Following \cite{DujellaRamasamy2005, Filipin2009}, we define
\begin{equation}
d_{\pm}=d_{\pm}(a,b,c)
=a+b+c+\frac{abc\pm rst}{2},
\label{eq:dpm}
\end{equation}
where $r,s,t$ are defined by \eqref{eq:lengths}.

\begin{remark}[see {\cite{Filipin2009}}] \label{res:dpm-extensions}
For any $D(4)$-triple $\{a,b,c\}$, the set
$\{a,b,c,d_+\}$ is a $D(4)$-quadruple.
Furthermore, if $d_-\neq0$, then
$\{a,b,c,d_-\}$ is also a $D(4)$-quadruple and $d_-<c$.
\end{remark}

We use the following characterisation of regular triples.
\begin{prop}[{\cite[Proposition~2]{BliznacFilipin2019}}]
\label{prop:regular-dminus}
The $D(4)$-triple $\{a,b,c\}$ is a regular triple if and only if
$d_-(a,b,c)=0$.
\end{prop}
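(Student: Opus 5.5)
The plan is to compute the product $d_+d_-$ explicitly. Since $d_+>0$ trivially, $d_-=0$ will then be equivalent to the vanishing of a simple factor, and that factor will turn out to be the regularity condition \eqref{eq:regular}.

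First I expand. Put $X=a+b+c+\tfrac{abc}{2}$, so that $d_\pm=X\pm\tfrac{rst}{2}$ by \eqref{eq:dpm}. Then
$$
d_+d_-=X^2-\frac{r^2s^2t^2}{4}.
$$
Using \eqref{eq:lengths}, I expand
$$
r^2s^2t^2=(ab+4)(ac+4)(bc+4)=a^2b^2c^2+4abc(a+b+c)+16(ab+ac+bc)+64.
$$
On the other side,
$$
X^2=(a+b+c)^2+abc(a+b+c)+\tfrac{a^2b^2c^2}{4}.
$$
In the difference the terms $\tfrac{a^2b^2c^2}{4}$ and $abc(a+b+c)$ cancel. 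What remains is
$$
d_+d_-=(a+b+c)^2-4(ab+ac+bc)-16=(c-a-b)^2-4ab-16.
$$
Since $4ab+16=4r^2$, this gives the factorisation
$$
d_+d_-=(c-a-b-2r)(c-a-b+2r).
$$
Keeping track of signs here is the only computational step. I expect it to be routine, though it is where care is needed.

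Next, $d_+$ is a sum of positive quantities, so $d_+>0$. Hence $d_-=0$ if and only if $c=a+b+2r$ or $c=a+b-2r$. If $\{a,b,c\}$ is regular, then $c=a+b+2r$ and so $d_-=0$.

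Conversely, suppose $d_-=0$. I must rule out the second possibility $c=a+b-2r$. The hypothesis $c>b$ would then force $a>2r$. However $r^2=ab+4>a^2$, because $b>a$, so $r>a$, which is a contradiction. Therefore $c=a+b+2r$, that is, the triple is regular. This exclusion of the spurious root is the one non-mechanical point of the argument, and it needs only the ordering $a<b<c$.
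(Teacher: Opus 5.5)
Your proof is correct and complete. The expansion gives $d_+d_-=(c-a-b)^2-4r^2=(c-a-b-2r)(c-a-b+2r)$, and $d_+>0$ is clear. The spurious root $c=a+b-2r$ is ruled out because $r>a$ would force $c<b$.

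The paper gives no proof of this statement itself. It simply imports it from \cite[Proposition~2]{BliznacFilipin2019}, so your computation is a self-contained justification rather than an alternative to an argument in the text.

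Your identity also yields something extra when combined with Lemma~\ref{lem:known-properties}. For a non-regular triple we have $c>a+b+2r$, so both factors are positive and hence $d_->0$ directly. This is the positivity of $e=d_-(a,b,c)$ used in the proof of Theorem~\ref{thm:triangle}.
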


\begin{prop}[{\cite[Proposition~3]{BliznacFilipin2019}}]
\label{prop:dminus-inversion}
Let $\{a,b,c\}$ be a $D(4)$-triple, such that $a<b<c$. We have
$a=d_-\bigl(b,c,d_+(a,b,c)\bigr)$,
$b=d_-\bigl(a,c,d_+(a,b,c)\bigr)$,
$c=d_-\bigl(a,b,d_+(a,b,c)\bigr)$.
Moreover, if $\{a,b,c\}$ is not a regular triple, then
$c=d_+\bigl(a,b,$ $d_-(a,b,c)\bigr)$.
In particular $\{a,b,d_-(a,b,c),c\}$ is a regular $D(4)$-quadruple.
\end{prop}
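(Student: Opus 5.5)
The plan is to reduce everything to one symmetric quartic relation. Put
$$
F(a,b,c,d)=(a+b-c-d)^2-(ab+4)(cd+4).
$$
Expanding, $F$ is a symmetric polynomial in $a,b,c,d$. Viewed as a quadratic in $d$ with $a,b,c$ fixed, it has leading coefficient $1$ and linear coefficient $-\bigl(2(a+b+c)+abc\bigr)$. Its discriminant is $(abc)^2+4abc(a+b+c)+\dots$, which for a $D(4)$-triple should factor as $r^2s^2t^2=(ab+4)(ac+4)(bc+4)$; verifying this factorisation is routine expansion. Hence the two roots in $d$ are exactly $d_\pm(a,b,c)$ as in \eqref{eq:dpm}. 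I will also take $F(a,b,c,d)=0$ as the definition of a regular $D(4)$-quadruple, this being the $D(4)$ analogue of the classical regularity condition.

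For the first three identities, I use that $F(a,b,c,d_+)=0$, where $d_+=d_+(a,b,c)$. By Remark~\ref{res:dpm-extensions}, $\{a,b,c,d_+\}$ is a $D(4)$-quadruple, so each of $\{b,c,d_+\}$, $\{a,c,d_+\}$, $\{a,b,d_+\}$ is a $D(4)$-triple.

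\begin{itemize}
\item Take the triple $\{b,c,d_+\}$ and regard $F$ as a quadratic in its first variable. By symmetry of $F$, its roots are $d_\pm(b,c,d_+)$, and $a$ is one of them.
\item The two roots differ, since $rst>0$ for the new triple. The larger root is $d_+(b,c,d_+)$, which exceeds $d_+$ because $d_+(x,y,z)>x+y+z$. On the other hand, $a<d_+$.
\item Therefore $a$ is the smaller root, that is, $a=d_-(b,c,d_+)$.
\end{itemize}

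The same argument applied to the triples $\{a,c,d_+\}$ and $\{a,b,d_+\}$ gives $b=d_-(a,c,d_+)$ and $c=d_-(a,b,d_+)$. Here one only needs $b,c<d_+$, which holds since $d_+>a+b+c$.

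For the second part, assume the triple is not regular. By Proposition~\ref{prop:regular-dminus}, $d_-\neq0$, so Remark~\ref{res:dpm-extensions} gives that $\{a,b,c,d_-\}$ is a $D(4)$-quadruple with $0<d_-<c$. Now regard $F(a,b,\cdot,d_-)=0$ as a quadratic in the third variable. Its roots are $d_\pm(a,b,d_-)$, and $c$ is one of them. Remark~\ref{res:dpm-extensions}, applied to the triple $\{a,b,d_-\}$, shows that its $d_-$-value is smaller than the largest element of $\{a,b,d_-\}$, and that largest element is smaller than $c$. Hence $c$ must be the larger root, so $c=d_+(a,b,d_-)$. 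Finally, $F(a,b,d_-,c)=0$ holds by symmetry, which is exactly the regularity of the quadruple $\{a,b,d_-,c\}$.

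I expect the main obstacle to be the algebraic step identifying the discriminant with $r^2s^2t^2$. A subsidiary point is making sure that the chosen regularity definition (the vanishing of $F$) agrees with \cite{BliznacFilipin2019}. The root-selection arguments are only inequalities once this is in place. A minor care point is the degenerate case where $d_-(a,b,d_-)=0$; it is harmless, since $0<c$ in any case.
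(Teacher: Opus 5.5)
The paper gives no proof of this proposition; it cites it from \cite{BliznacFilipin2019}. Your argument is correct and complete, so the comparison is with the computational style the paper uses elsewhere rather than with an in-text proof.

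Two of the worries you flagged are not real obstacles. First, the discriminant identity
\[
\bigl(2(a+b+c)+abc\bigr)^2-4\bigl(a^2+b^2+c^2-2ab-2ac-2bc-16\bigr)=(ab+4)(ac+4)(bc+4)
\]
holds in $\mathbb{Z}[a,b,c]$ with no $D(4)$ hypothesis. Second, $F=0$ is exactly the regularity condition $(a+b-c-d)^2=(ab+4)(cd+4)$ of \cite{DujellaRamasamy2005,Filipin2009}. Since you also prove $c=d_+(a,b,d_-)$, the conclusion also holds if regularity is phrased as the fourth element being one of $d_{\pm}$ of the other three.

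In Lemma~\ref{lem:extension-ineq} and Theorem~\ref{thm:triangle} the paper works with explicit formulas such as $2\sqrt{cd_++4}=cr+st$. A proof in that style would check $d_-(b,c,d_+)=a$ by substitution. For the second part it would need $2\sqrt{ad_-+4}=|at-rs|$ and $2\sqrt{bd_-+4}=|bs-rt|$, and the signs of $at-rs$ and $bs-rt$ would have to be determined.

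Your Vieta argument on the symmetric quartic avoids computing any new square roots. The three identities in the first part follow from the trivial bound $d_+(x,y,z)>x+y+z$. The only arithmetic input in the second part is the bound $d_-<\max$ from Remark~\ref{res:dpm-extensions}, applied to the new triple $\{a,b,d_-\}$; you correctly dispose of the case $d_-(a,b,d_-)=0$. The cost is that the second part reduces to Remark~\ref{res:dpm-extensions} and Proposition~\ref{prop:regular-dminus}, which the paper also only cites. That reliance is legitimate, since both are stated before this proposition.
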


We will use the following direct consequence of \cite[Lemma~3]{Filipin2009}.

\begin{lemma}\label{lem:known-properties}
Let $\{a,b,c\}$ be a $D(4)$-triple with $a<b<c$, and let $r=\sqrt{ab+4}$. Then $c\geq a+b+2r$.
\end{lemma}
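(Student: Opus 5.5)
The plan is to split on whether $\{a,b,c\}$ is regular and to use Propositions~\ref{prop:regular-dminus} and \ref{prop:dminus-inversion} to reduce the non-regular case to an explicit lower bound for $d_+$. If the triple is regular, then by definition $c=a+b+2r$, so equality holds and there is nothing to prove.

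Suppose next that $\{a,b,c\}$ is not regular. By Proposition~\ref{prop:regular-dminus} we have $e:=d_-(a,b,c)\neq 0$. By Remark~\ref{res:dpm-extensions}, $\{a,b,c,e\}$ is then a $D(4)$-quadruple, so $e$ is a positive integer. It also satisfies $e<c$ and makes $ae+4$ and $be+4$ perfect squares. Write $s'=\sqrt{ae+4}$ and $t'=\sqrt{be+4}$. Proposition~\ref{prop:dminus-inversion} gives
$$
c=d_+(a,b,e)=a+b+e+\frac{abe+rs't'}{2}.
$$
I would use this only as the defining formula \eqref{eq:dpm} applied to $a,b,e$, so the ordering of $e$ relative to $a$ and $b$ should not matter.

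It then suffices to show that $e+\tfrac{abe+rs't'}{2}\ge 2r$. Since $a,b,e\ge1$, we have $s'=\sqrt{ae+4}\ge\sqrt5>2$ and similarly $t'>2$. Hence $rs't'/2>2r$, and the remaining terms $e+abe/2$ are positive. This gives $c>a+b+2r$, which is in fact strict in the non-regular case.

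The main obstacle is justifying that $d_-(a,b,c)$ is a positive integer, and not merely nonzero, in the non-regular case. Only then may we substitute it into $d_+$ as a genuine element of a $D(4)$-quadruple. I would take this from Remark~\ref{res:dpm-extensions}, since elements of a $D(4)$-quadruple are positive integers by definition. The identity $c=d_+(a,b,d_-)$ could alternatively be verified by direct algebraic manipulation, but citing Proposition~\ref{prop:dminus-inversion} avoids that.
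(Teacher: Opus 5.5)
Your argument is correct, but it takes a different route from the paper. The paper proves the lemma in one line from Filipin's gap principle (Lemma~3 of Filipin's paper): either $c=a+b+2r$ or $c\ge 4b$, and $r\le b$ gives $a+b+2r<4b$.

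You instead work inside the $d_\pm$ machinery. You take positivity of $e=d_-(a,b,c)$ from Remark~\ref{res:dpm-extensions} and the inversion $c=d_+(a,b,e)$ from Proposition~\ref{prop:dminus-inversion}. Formula \eqref{eq:dpm}, which is symmetric in its three arguments as you note, then gives
$c-(a+b+2r)=e+\tfrac{abe}{2}+\tfrac{r(s't'-4)}{2}>0$.
Both routes give strict inequality in the non-regular case.

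It is worth seeing what actually carries your proof. From \eqref{eq:dpm} one computes
$d_+d_-=(c-a-b)^2-4r^2=(c-a-b-2r)(c-a-b+2r)$.
Since $d_+>0$ and $c-a-b+2r>0$ (because $c>b$ and $r>a$), the lemma is equivalent to $d_-(a,b,c)\ge 0$. So the entire content of the lemma is the nonnegativity of $d_-$ that is built into Remark~\ref{res:dpm-extensions}. With this identity you could drop Proposition~\ref{prop:dminus-inversion}, and even Proposition~\ref{prop:regular-dminus}, altogether.

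Within the paper, where the preliminaries are cited as black boxes, your proof is valid. The paper's citation is simply more direct and yields the stronger dichotomy ($c$ regular or $c\ge4b$). Yours keeps everything inside the $d_\pm$ framework already used in the proof of Theorem~\ref{thm:triangle}.
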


\begin{proof}
By \cite[Lemma~3]{Filipin2009}, either $c=a+b+2r$ or $c\geq4b$.
Since $a<b$ and $r^2=ab+4$, we have $r\leq b$, and therefore
$a+b+2r<4b$. Hence $c\geq a+b+2r$.
\end{proof}

\section{The triangle criterion}\label{sec:criterion}

We first establish an inequality for the regular extension of a
$D(4)$-triple. It will be used to exclude non-regular triples in the
proof of the triangle criterion.

\begin{lemma}\label{lem:extension-ineq}
Let $\{p,q,u\}$, $p<q<u$, be a $D(4)$-triple, and let
$$
w=d_+(p,q,u).
$$
Then
\begin{equation}
\sqrt{uw+4}-\sqrt{qw+4}>\sqrt{qu+4}.
\label{eq:lemma-ineq}
\end{equation}
\end{lemma}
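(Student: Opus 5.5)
The plan is to make every square root in \eqref{eq:lemma-ineq} an explicit polynomial expression in $p,q,u$ and the three lengths of $\{p,q,u\}$, and then reduce the inequality to an elementary one controlled by Lemma~\ref{lem:known-properties}.

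Put $r=\sqrt{pq+4}$, $s=\sqrt{pu+4}$ and $t=\sqrt{qu+4}$. For $w=d_+(p,q,u)$ I will use the standard identities (as in \cite{Filipin2009, DujellaRamasamy2005}; they follow from \eqref{eq:dpm} by squaring)
$$
\sqrt{pw+4}=\frac{pt+rs}{2},\qquad \sqrt{qw+4}=\frac{qs+rt}{2},\qquad \sqrt{uw+4}=\frac{ur+st}{2}.
$$
With these, the left-hand side of \eqref{eq:lemma-ineq} becomes $\tfrac12\bigl(r(u-t)+s(t-q)\bigr)$. So the lemma is equivalent to
$$
F:=r(u-t)+s(t-q)-2t>0 .
$$

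Next I collect lower bounds from Lemma~\ref{lem:known-properties}, which gives $u\ge p+q+2r$.
\begin{itemize}
\item Since $r^2=pq+4$, we get $t^2=qu+4\ge q^2+pq+2qr+4=(q+r)^2$, so $t\ge q+r$.
\item In the same way $s^2\ge (p+r)^2$, so $s\ge p+r\ge 3$.
\item Since $u-q\ge p+2r\ge 5$, we have $u^2-qu-4=u(u-q)-4>0$, so $u>t$.
\end{itemize}
Hence both terms $r(u-t)$ and $s(t-q)$ are positive, and $t-q\ge r$.

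Before the general case, I will check the regular situation $u=p+q+2r$ as a sanity test. There $s=p+r$ and $t=q+r$, so $F=2r(p+r)-2(q+r)$. Using $r^2=pq+4$, this equals $2\bigl(pr-r+(p-1)q+4\bigr)>0$.

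For the general case I split $2t=2(t-q)+2q$ and write
$$
F=r(u-t)+(s-2)(t-q)-2q .
$$
The term $(s-2)(t-q)$ is at least $(p+r-2)r=pr-2r+pq+4$. This already exceeds $2q$ when $p\ge 3$. It also does so when $p=2$, since then $r^2=2q+4$ forces $r<q$ once $q$ is not tiny.

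The main obstacle is the case $p=1$, where this bound only gives roughly $q+4-r$. Here the term $r(u-t)$ must supply the missing amount. I will bound it via
$$
u-t=\frac{u(u-q)-4}{u+t}\ge\frac{u(u-q)-4}{2u}\ge \frac{u-q}{2}-\frac{2}{u}.
$$
Then $r(u-t)$ is at least about $r(p+2r)/2\ge r^2=pq+4$, which covers the remaining $q$. If the estimates turn out too tight for small values, I will either refine the bound on $t-q$ using $t-q=(q(u-q)+4)/(t+q)$, or finish the few remaining small cases by direct computation.
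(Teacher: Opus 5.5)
Your reduction to $F=r(u-t)+s(t-q)-2t>0$, the bounds $s\ge p+r$, $t\ge q+r$, $u>t$, the rewriting $F=r(u-t)+(s-2)(t-q)-2q$, and the case $p\ge 2$ are all correct. For $p=2$ the bound gives $(s-2)(t-q)\ge r^2=2q+4>2q$ directly, so you do not need to compare $r$ with $q$.

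\textbf{The gap is the case $p=1$.} There $(s-2)(t-q)\ge r^2-r=q+4-r$, so $r(u-t)$ has to exceed $q+r-4$, not just $q$. Your estimate $u-t\ge\frac{u-q}{2}-\frac2u$, combined with $u-q\ge 2r+1$, only gives $r(u-t)\ge r^2+\frac r2-\frac{2r}{u}$. Altogether your chain yields
\[
F\ \ge\ 2r^2-2q-\frac r2-\frac{2r}{u}\ =\ 8-\frac r2-\frac{2r}{u},
\]
which is negative for every $r\ge 16$.

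This cannot be fixed by checking a few small cases. For every $r\ge3$ the regular triple $\{1,r^2-4,r^2+2r-3\}$ has $F=8$ exactly; this is your own sanity computation. So the inequality has a constant margin, while your step $u+t\le 2u$ loses about $r/2$: in the regular case $u-t=r+1$, but your bound gives $r+\frac12-\frac2u$. Already $\{1,252,285\}$, with $r=16$, is not covered. Refining the bound on $t-q$ does not help either, because $t-q\ge r$ is an equality in the regular case.

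You can complete the argument along your own lines by making the sanity check a genuine case. If $u=p+q+2r$, you computed $F=2(pr-r+(p-1)q+4)>0$. Otherwise integrality gives $u\ge p+q+2r+1$, so for $p=1$ you have $u-q\ge 2r+2$. Then $r(u-t)\ge r^2+r-\frac{2r}{u}$, and $F\ge 8-\frac{2r}{u}>0$ since $u>r$.

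The paper avoids the loss differently. It groups $F=r(u-q)+(s-r)(t-q)-2t$ and uses $s-r\ge p$ to get $F\ge r(u-q)-pq+(p-2)t$. For $p=1$ it then proves $r(u-q)-q>t$ by squaring. That estimate is sharp enough to handle all $u\ge p+q+2r$ at once, without separating out the regular case.
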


\begin{proof}
By Lemma~\ref{lem:known-properties} we have
\begin{equation}
u\ge p+q+2\sqrt{pq+4}
\label{eq:uq-bound}
\end{equation}
and since
$$
pu+4\ge p\bigl(p+q+2\sqrt{pq+4}\bigr)+4
       =\bigl(p+\sqrt{pq+4}\bigr)^2,
$$
we have 
\begin{equation}
\sqrt{pu+4}\ge p+\sqrt{pq+4}.
\label{eq:pu-bound}
\end{equation}
From \eqref{eq:dpm}, since $w=d_+(p,q,u)$, we have
$$
2\sqrt{uw+4}
=u\sqrt{pq+4}+\sqrt{pu+4}\sqrt{qu+4}
$$
and
$$
2\sqrt{qw+4}
=q\sqrt{pu+4}+\sqrt{pq+4}\sqrt{qu+4}.
$$
Hence
\begin{align}
&2\bigl(\sqrt{uw+4}-\sqrt{qw+4}-\sqrt{qu+4}\bigr) \notag\\
&\quad=\sqrt{pq+4}(u-q)
 +\bigl(\sqrt{pu+4}-\sqrt{pq+4}\bigr)
  \bigl(\sqrt{qu+4}-q\bigr)
 -2\sqrt{qu+4}.
\label{eq:keyidentity}
\end{align}
Since $u>q$, we have $\sqrt{qu+4}>q$. Moreover,
\eqref{eq:pu-bound} gives
$$
\sqrt{pu+4}-\sqrt{pq+4}\ge p.
$$
Therefore, \eqref{eq:keyidentity} gives
\begin{align}
&2\bigl(\sqrt{uw+4}-\sqrt{qw+4}-\sqrt{qu+4}\bigr) \notag\\
&\quad\ge \sqrt{pq+4}(u-q)
+p\bigl(\sqrt{qu+4}-q\bigr)-2\sqrt{qu+4} \notag\\
&\quad=
\sqrt{pq+4}(u-q)-pq+(p-2)\sqrt{qu+4}.
\label{eq:keybound}
\end{align}

If $p\ge2$, then \eqref{eq:uq-bound} gives
$$
\sqrt{pq+4}(u-q)
\ge p\sqrt{pq+4}+2pq+8>2pq
$$
and the right-hand side of \eqref{eq:keybound} is
strictly greater than $2pq-pq=pq>0$.

Now let $p=1$. From \eqref{eq:uq-bound}, we have
\begin{equation}
u-q\ge1+2\sqrt{q+4}.
\label{eq:p1lower}
\end{equation}
Hence
$$
\sqrt{q+4}(u-q)-q>0.
$$
Using \eqref{eq:p1lower}, we have
$$
\begin{aligned}
&(q+4)(u-q)-q\bigl(1+2\sqrt{q+4}\bigr)\\
&\qquad\ge
(q+4)\bigl(1+2\sqrt{q+4}\bigr)
-q\bigl(1+2\sqrt{q+4}\bigr)\\
&\qquad=
4\bigl(1+2\sqrt{q+4}\bigr),
\end{aligned}
$$
and consequently,
\begin{align*}
&\bigl(\sqrt{q+4}(u-q)-q\bigr)^2-(qu+4)\\
&\quad=(u-q)\Bigl((q+4)(u-q)
-q\bigl(1+2\sqrt{q+4}\bigr)\Bigr)-4\\
&\quad\ge4(u-q)\bigl(1+2\sqrt{q+4}\bigr)-4>0.
\end{align*}
Therefore
$$
\sqrt{q+4}(u-q)-q>\sqrt{qu+4},
$$
and the right-hand side of \eqref{eq:keybound} is again positive.
\end{proof}

We can now prove the main criterion.

\begin{theorem}
\label{thm:triangle}
Let $\{a,b,c\}$, $a<b<c$, be a $D(4)$-triple, and let $r,s,t$ be
defined by \eqref{eq:lengths}. Then $r,s,t$ are the side lengths of the non-degenerate triangle associated with $\{a,b,c\}$ if and only if $\{a,b,c\}$ is regular and
\begin{equation}
r>b-a.
\label{eq:trianglecriterion}
\end{equation}
\end{theorem}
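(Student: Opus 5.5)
We split into the regular and the non-regular case. Since $r<s<t$, the triangle is non-degenerate exactly when $r+s>t$.

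\emph{Regular case.} Suppose $\{a,b,c\}$ is regular, so $c=a+b+2r$. By \eqref{eq:regular-identities} we have $s=a+r$ and $t=b+r$. Then
\[
r+s-t=r+(a+r)-(b+r)=r-(b-a),
\]
so $r+s>t$ holds if and only if $r>b-a$. This gives both implications among regular triples.

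\emph{Non-regular case.} It remains to show that a non-regular triple never yields a non-degenerate triangle, i.e.\ that $t\ge r+s$; we will in fact get $t>r+s$. Let $\{a,b,c\}$ be non-regular and set $d=d_-(a,b,c)$.
\begin{enumerate}[(i)]
\item By Proposition~\ref{prop:regular-dminus}, $d\neq 0$.
\item By Remark~\ref{res:dpm-extensions}, $d<c$ and $\{a,b,c,d\}$ is a $D(4)$-quadruple. Since $d_-$ is an integer, this forces $d$ to be a positive integer, the triple $\{a,b,d\}$ consists of distinct elements, and it is a $D(4)$-triple.
\item By Proposition~\ref{prop:dminus-inversion}, $c=d_+(a,b,d)$.
\end{enumerate}

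We then apply Lemma~\ref{lem:extension-ineq} to the triple $\{a,b,d\}$ written in increasing order as $\{p,q,u\}$, with $w=d_+(p,q,u)=c$. Note that $d_+$ is symmetric in its arguments, so the ordering does not affect $w$. The lemma gives
\[
\sqrt{uc+4}-\sqrt{qc+4}>\sqrt{qu+4}.
\]
We need to turn this into $t-s>r$, that is, $\sqrt{bc+4}-\sqrt{ac+4}>\sqrt{ab+4}$. The lemma yields exactly this only when $\{q,u\}=\{a,b\}$, i.e.\ when $d<a$. So we split according to the position of $d$ relative to $a$ and $b$.

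\emph{Case $d<a$.} Here $p=d$, $q=a$, $u=b$, and the lemma gives $t-s>r$ directly.

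\emph{Case $d>a$.} Here $a$ is the smallest element, $p=a$, and $\{q,u\}=\{b,d\}$ in some order. The lemma then produces the inequality for the pair $\{b,d\}$, not $\{a,b\}$, so a further argument is needed. I would try monotonicity: the function
\[
x\mapsto \sqrt{xc+4}-\sqrt{ac+4}-\sqrt{ax+4}
\]
is increasing in $x$ when $c$ is large compared with $a$, and this could be used to compare different pairs.

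A cleaner alternative is to work directly from \eqref{eq:dpm} for $c=d_+(a,b,d)$. Writing $r_{xy}=\sqrt{xy+4}$, we have
\[
2t=b\,r_{ad}+r_{ab}\,r_{bd},\qquad 2s=a\,r_{bd}+r_{ab}\,r_{ad},
\]
so
\[
2(t-s-r)=(b-a)\,r_{ad}+r_{ab}\bigl(r_{bd}-r_{ad}\bigr)-a\,r_{bd}+\dots-2r_{ab},
\]
and I would mimic the estimates of Lemma~\ref{lem:extension-ineq} using $d\ge1$. This second route looks most robust. I would first check the sign on small examples; for instance, $\{1,5,12\}$ is regular, so a genuinely non-regular example is needed for testing.

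\emph{Main obstacle.} The main obstacle is this final step: controlling the sign of $t-s-r$ when $d_-$ is not the smallest element of the quadruple $\{a,b,c,d\}$.
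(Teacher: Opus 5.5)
Your regular case and the sub-case $d<a$ match the paper exactly. The proof has a genuine gap, though: the case $d>a$, which you flag as the main obstacle, is left open, and neither of your suggestions closes it. It has to be split further.

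If $a<d<b$, Lemma~\ref{lem:extension-ineq} with $(p,q,u,w)=(a,d,b,c)$ already suffices. It gives $t>\sqrt{dc+4}+\sqrt{db+4}$, and $d>a$ yields $\sqrt{dc+4}>s$ and $\sqrt{db+4}>r$, so $t>r+s$. The monotonicity you need is in the middle element, not in the function of $x$ you propose.

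If $d>b$, the lemma only relates $\sqrt{dc+4}$, $\sqrt{bc+4}$ and $\sqrt{bd+4}$, and does not give $t-s\ge r$. Two ingredients are missing there:
\begin{enumerate}[(i)]
\item Lemma~\ref{lem:known-properties} applied to the triple $\{a,b,d\}$, which gives $d\ge a+b+2r$, hence $\sqrt{ad+4}\ge a+r$ and $\sqrt{bd+4}\ge b+r$;
\item the bound $a<r\le b$.
\end{enumerate}
Your expansion from \eqref{eq:dpm}, correctly regrouped, is $2(t-s)=(b-r)\sqrt{ad+4}+(r-a)\sqrt{bd+4}$. With (i) and (ii) its coefficients are nonnegative, and you get $2(t-s)\ge(b-r)(a+r)+(r-a)(b+r)=2r(b-a)\ge 2r$. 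Mimicking Lemma~\ref{lem:extension-ineq} using only $d\ge1$ is far too weak for this.

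Your claim that you ``will in fact get $t>r+s$'' is also false for non-regular triples. So any argument aiming at strict inequality in the case $d>b$ must fail. The triple $\{3,4,224\}$ of Table~\ref{tab:three-cases} is non-regular, with $d_-(3,4,224)=15=a+b+2r>b$. Its lengths $(4,26,30)$ satisfy $r+s=t$. Here the bound above is attained, since $b-a=1$ and $d=a+b+2r$. So one can only conclude $t\ge r+s$, which is all the theorem requires.
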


\begin{proof}
Suppose first that $\{a,b,c\}$ is regular. By
\eqref{eq:regular-identities},
$$
r+s-t=r+a-b.
$$
Hence $r+s>t$ if and only if $r>b-a$.

Now suppose that $\{a,b,c\}$ is non-regular and put $e=d_-(a,b,c)$.
By Proposition~\ref{prop:regular-dminus}, we have $e\neq0$.
By Remark~\ref{res:dpm-extensions}, $\{a,b,c,e\}$ is a $D(4)$-quadruple and $e<c$.
Therefore $0<e<c$, $e\notin\{a,b\}$, and $\{a,b,e\}$ is a $D(4)$-triple.
By Proposition~\ref{prop:dminus-inversion}, we have $c=d_+(a,b,e)$.
We distinguish the three cases for $e$.

If $e<a$, then $\{e,a,b\}$ is a $D(4)$-triple and
$c=d_+(e,a,b)$. Applying Lemma~\ref{lem:extension-ineq} with
$(p,q,u,w)=(e,a,b,c)$ gives
$$
\sqrt{bc+4}-\sqrt{ac+4}>\sqrt{ab+4},
$$
that is, $t-s>r$. Hence $r+s<t$.

If $a<e<b$, then $\{a,e,b\}$ is a $D(4)$-triple and
$c=d_+(a,e,b)$. Applying Lemma~\ref{lem:extension-ineq} with
$(p,q,u,w)=(a,e,b,c)$ gives
$$
t-\sqrt{ec+4}>\sqrt{eb+4}.
$$
Since $e>a$,
$$
\sqrt{ec+4}>s,
\qquad
\sqrt{eb+4}>r,
$$
so $t>s+r$.

It remains to consider $b<e<c$. Since $\{a,b,e\}$ is a $D(4)$-triple,
Lemma~\ref{lem:known-properties} gives
\begin{equation}
e\ge a+b+2r.
\label{eq:e-min}
\end{equation}
Since $c=d_+(a,b,e)$, a direct calculation from \eqref{eq:dpm} gives
\begin{align*}
2s&=a\sqrt{be+4}+r\sqrt{ae+4},\\
2t&=b\sqrt{ae+4}+r\sqrt{be+4}.
\end{align*}
Therefore
\begin{equation}
2(t-s)
=(b-r)\sqrt{ae+4}+(r-a)\sqrt{be+4}.
\label{eq:tsdiff}
\end{equation}

Since $r^2=ab+4>a^2$, we have $r>a$, and the proof of
Lemma~\ref{lem:known-properties} gives $r\le b$. Hence
\begin{equation}
a<r\le b.
\label{eq:rbetween}
\end{equation}

Thus the coefficients in \eqref{eq:tsdiff} are nonnegative. From
\eqref{eq:e-min},
\begin{align*}
2(t-s)
&\ge (b-r)\sqrt{a(a+b+2r)+4}
 +(r-a)\sqrt{b(a+b+2r)+4}\\
&=(b-r)(a+r)+(r-a)(b+r)\\
&=2r(b-a).
\end{align*}
Here we used $r^2=ab+4$, since $a(a+b+2r)+4=(a+r)^2$
and $b(a+b+2r)+4=(b+r)^2$.
Since $a<b$ are integers, $b-a\ge1$, and therefore $2(t-s)\ge2r$.
Hence $r+s\le t$, so the non-degenerate triangle associated with
$\{a,b,c\}$ does not exist.
\end{proof}

As an immediate consequence of Theorem~\ref{thm:triangle}, every
triangle associated with a $D(4)$-triple has shortest side at least $4$.
Since $a$ and $b$ are distinct positive integers, $ab\ge2$, and therefore $r^2=ab+4\ge6.$ Hence $r\ge3$. 
If $r=3$, then $ab=5,$ so $(a,b)=(1,5)$. In this case $b-a=4>3=r$, and Theorem~\ref{thm:triangle}
shows that the associated triangle does not exist. Therefore every triangle associated with a $D(4)$-triple has shortest side $r\ge4$. The bound is attained for the $D(4)$-triple $\{3,4,15\}$, whose associated triangle has side lengths $(4,7,8)$.

\begin{remark}
Recall that a set of distinct positive integers has the property $D(n)$
if the product of any two distinct elements, increased by $n$, is a
perfect square.
\begin{enumerate}[(i)]

\item
Let $n$ be a positive integer, let $\{a,b\}$ be a $D(n)$-pair with $a<b$, and put $r=\sqrt{ab+n}$. For the regular extension
$c=a+b+2r$, we have $ac+n=(a+r)^2$ and $bc+n=(b+r)^2$ (see, for example, \cite{Dujella1996}).
Hence, if $s=\sqrt{ac+n}$ and $t=\sqrt{bc+n}$, then
$s=a+r$ and $t=b+r$. Therefore, the associated lengths $r,s,t$ form a non-degenerate triangle if and only if $r>b-a$.

\item
For $n=4$, Theorem~\ref{thm:triangle} shows that the converse also holds: if the lengths associated with a $D(4)$-triple form a non-degenerate triangle, then the triple is necessarily regular. This implication does not hold for arbitrary $n$. For example, $\{5,8,104\}$ is a non-regular $D(9)$-triple with corresponding lengths $(7,23,29)$, and $7+23>29$.

\item
Theorem~\ref{thm:triangle} also gives a criterion for $D(1)$-triples. Let $\{a,b,c\}$, $a<b<c$, be a $D(1)$-triple, and put
$$
r=\sqrt{ab+1},\qquad
s=\sqrt{ac+1},\qquad
t=\sqrt{bc+1}.
$$
Then $\{2a,2b,2c\}$ is a $D(4)$-triple whose associated lengths are $2r,2s,2t$. By Theorem~\ref{thm:triangle}, $r+s>t$ if and only if $2c=2a+2b+4r$ and $2r>2b-2a$. Equivalently, $r,s,t$ form a non-degenerate triangle if and only if $c=a+b+2r$ and $r>b-a$.
\end{enumerate}
\end{remark}

The triangle criterion also admits an equivalent formulation in terms
of $a$ and $b$, and gives a direct expression for the area.

\begin{corollary}
Let $\{a,b,c\}$, $a<b<c$, be a $D(4)$-triple, and let $r,s,t$ be
defined by \eqref{eq:lengths}. Then the triangle associated with the $D(4)$-triple $\{a,b,c\}$ exists if and only if $\{a,b,c\}$ is regular and
\begin{equation}
a^2+b^2<3ab+4.
\label{eq:arithcriterion}
\end{equation}
If the triangle associated with the $D(4)$-triple $\{a,b,c\}$ exists, its area $A$ satisfies
\begin{equation}
16A^2=(c^2-r^2)\bigl(3ab+4-a^2-b^2\bigr).
\label{eq:area}
\end{equation}
\end{corollary}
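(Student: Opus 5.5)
We will derive the corollary from Theorem~\ref{thm:triangle} together with the regular identities \eqref{eq:regular-identities}. By Theorem~\ref{thm:triangle}, the triangle exists if and only if the triple is regular and $r>b-a$. Since $r>0$ and $b-a>0$, the inequality $r>b-a$ is equivalent to $r^2>(b-a)^2$. Substituting $r^2=ab+4$ gives
$$
ab+4>a^2-2ab+b^2,
$$
which is exactly \eqref{eq:arithcriterion}. This settles the existence criterion.

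For the area we use Heron's formula in the factored form
$$
16A^2=(r+s+t)(-r+s+t)(r-s+t)(r+s-t).
$$
In the regular case we have $s=a+r$, $t=b+r$ and $c=a+b+2r$. The plan is to group the four factors into two products. First,
$$
(r+s+t)(-r+s+t)=(s+t)^2-r^2=(a+b+2r)^2-r^2=c^2-r^2.
$$
Second, the remaining two factors are $r-s+t=r+b-a$ and $r+s-t=r+a-b$, so
$$
(r-s+t)(r+s-t)=r^2-(b-a)^2=ab+4-(a-b)^2=3ab+4-a^2-b^2.
$$
Multiplying the two products gives \eqref{eq:area}.

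There is no real obstacle here. The only point needing care is the grouping of the Heron factors so that the pair $s+t$ collapses to $c$ via $c=a+b+2r$. As a consistency check, the factor $3ab+4-a^2-b^2$ is positive precisely under \eqref{eq:arithcriterion}, and $c>r$ holds, so the right-hand side of \eqref{eq:area} is positive exactly when the triangle exists.
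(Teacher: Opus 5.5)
Your proposal is correct and follows essentially the same route as the paper: you reduce Theorem~\ref{thm:triangle}'s condition $r>b-a$ to \eqref{eq:arithcriterion} via $r^2=ab+4$. You then apply Heron's formula with the regular identities, pairing the factors as $(c+r)(c-r)$ and $r^2-(b-a)^2$, exactly as the paper does.
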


\begin{proof}
Theorem~\ref{thm:triangle} shows that the triangle associated with the
$D(4)$-triple $\{a,b,c\}$ exists if and
only if $\{a,b,c\}$ is regular and $r>b-a$. Since $r^2=ab+4$,
$$
r>b-a
\quad\Longleftrightarrow\quad
ab+4>(b-a)^2,
$$
which is equivalent to \eqref{eq:arithcriterion}.

If the triangle associated with the $D(4)$-triple $\{a,b,c\}$ exists, then $\{a,b,c\}$ is regular. Hence
\eqref{eq:regular} and \eqref{eq:regular-identities} hold and Heron's formula gives
\begin{align*}
16A^2
&=(r+s+t)(-r+s+t)(r-s+t)(r+s-t)\\
&=(a+b+3r)(a+b+r)(r+b-a)(r-b+a)\\
&=(c^2-r^2)\bigl(r^2-(b-a)^2\bigr)\\
&=(c^2-r^2)\bigl(3ab+4-a^2-b^2\bigr).
\end{align*}
\end{proof}

\section{Properties of triangles associated with $D(4)$-triples}\label{sec:geometry}

We now derive some geometric consequences of
Theorem~\ref{thm:triangle}.

By Theorem~\ref{thm:triangle}, whenever the triangle with lengths $\{r,s,t\}$ associated with a
$D(4)$-triple $\{a,b,c\}$ exists, the triple is regular. Hence
\eqref{eq:regular-identities} gives $s-r=a$ and $t-r=b$. Since
$r^2=ab+4$, it follows that $(s-r)(t-r)=r^2-4.$
We formulate this as a lemma for later reference.

\begin{lemma} \label{lem:associated-identities}
Let $\{a,b,c\}$, $a<b<c$, be a $D(4)$-triple, and let $r,s,t$ be
defined by \eqref{eq:lengths}. If the triangle associated with the $D(4)$-triple $\{a,b,c\}$ exists, then
\begin{equation}
(s-r)(t-r)=r^2-4.
\label{eq:factorised}
\end{equation}
\end{lemma}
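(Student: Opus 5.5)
The plan is to reduce the identity to the two relations $s=a+r$ and $t=b+r$ from \eqref{eq:regular-identities}, which hold only for regular triples, and then use the defining relation $r^2=ab+4$.

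The first step is to invoke Theorem~\ref{thm:triangle}. Since the triangle associated with $\{a,b,c\}$ exists, that theorem gives that the triple is regular, so $c=a+b+2r$ by \eqref{eq:regular}. (The inequality $r>b-a$ from the theorem is not needed here.)

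With regularity in hand, I would verify \eqref{eq:regular-identities} directly rather than just cite it. Substituting $c=a+b+2r$ and $ab=r^2-4$ gives
\begin{align*}
ac+4&=a^2+ab+2ar+4=a^2+2ar+r^2=(a+r)^2,\\
bc+4&=ab+b^2+2br+4=b^2+2br+r^2=(b+r)^2.
\end{align*}
Since $a,b,r>0$, taking positive square roots gives $s=a+r$ and $t=b+r$.

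It follows that $s-r=a$ and $t-r=b$. Therefore $(s-r)(t-r)=ab=r^2-4$, which is \eqref{eq:factorised}.

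There is no real obstacle. The only substantive input is regularity, and Theorem~\ref{thm:triangle} supplies it; everything after that is algebra.
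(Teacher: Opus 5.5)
Your proof is correct and matches the paper's argument: Theorem~\ref{thm:triangle} gives regularity, \eqref{eq:regular-identities} then yields $s-r=a$ and $t-r=b$, and $ab=r^2-4$ finishes the identity. Your explicit check of $ac+4=(a+r)^2$ and $bc+4=(b+r)^2$ supplies the ``direct calculation'' the paper leaves implicit.
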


\subsection{Fixed shortest side}

\begin{corollary}\label{cor:factor-param}
For an integer $r\ge4$, the side-length triples $(r,s,t)$ of triangles associated with $D(4)$-triples are exactly
\begin{equation}
(r,s,t)=(r,r+a,r+b),
\label{eq:factor-param}
\end{equation}
where $a,b$ are positive integers satisfying
\begin{equation}
r^2-4=ab,\qquad 0<a<b,\qquad b-a<r.
\label{eq:factor-pairs}
\end{equation}
The corresponding $D(4)$-triple is uniquely determined and is
\begin{equation}
\{a,b,a+b+2r\}.
\label{eq:triple-from-factors}
\end{equation}
\end{corollary}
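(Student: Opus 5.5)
The plan is to prove the two inclusions separately and then read off uniqueness from the regularity identities, leaning throughout on Theorem~\ref{thm:triangle} and \eqref{eq:regular-identities}.

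\emph{Necessity.} Let $(r,s,t)$ be the side lengths of a triangle associated with some $D(4)$-triple $\{a,b,c\}$, $a<b<c$, with shortest side $r\ge4$. Theorem~\ref{thm:triangle} gives two facts: the triple is regular, and $r>b-a$. Regularity lets us apply \eqref{eq:regular-identities}, so $s=r+a$ and $t=r+b$. By definition $r^2=ab+4$, so $ab=r^2-4$. Since $0<a<b$ holds by the ordering, the pair $(a,b)$ satisfies \eqref{eq:factor-pairs}, and $(r,s,t)$ has the form \eqref{eq:factor-param}. Regularity also forces $c=a+b+2r$, which is \eqref{eq:triple-from-factors}.

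\emph{Sufficiency.} Take positive integers $a,b$ satisfying \eqref{eq:factor-pairs}, and put $c=a+b+2r$.
\begin{enumerate}[(i)]
\item Since $ab+4=r^2$, the pair $\{a,b\}$ is a $D(4)$-pair with $\sqrt{ab+4}=r$; here $r>0$ is the given integer.
\item The regular extension gives $ac+4=(a+r)^2$ and $bc+4=(b+r)^2$. This is the direct check behind \eqref{eq:regular-identities}, or equivalently the Remark with $n=4$.
\item We have $c>b$, because $c-b=a+2r>0$. So $\{a,b,c\}$ is a $D(4)$-triple with $a<b<c$, and it is regular.
\item Its associated lengths are $r$, $s=a+r$ and $t=b+r$.
\item Since $r>b-a$, Theorem~\ref{thm:triangle} (or simply $r+s-t=r+a-b>0$) shows that these lengths form a non-degenerate triangle.
\end{enumerate}
This triangle is $(r,r+a,r+b)$, and $r$ is its shortest side because $a,b>0$.

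\emph{Uniqueness of the triple.} Suppose a $D(4)$-triple $\{a',b',c'\}$ has associated triangle $(r,s,t)$. By the necessity part, $a'=s-r$ and $b'=t-r$, and $c'=a'+b'+2r$. All three are therefore determined by $(r,s,t)$, which gives $\{a,b,a+b+2r\}$.

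I expect no real obstacle, since the heavy lifting is done by Theorem~\ref{thm:triangle}. The only point needing care is in the sufficiency direction. One must confirm that the constructed $c$ makes $s,t$ integers, which is where the identities $ac+4=(a+r)^2$ and $bc+4=(b+r)^2$ are used. One must also confirm the ordering $a<b<c$, so that $r,s,t$ really are the lengths defined in \eqref{eq:lengths} with the same labelling.
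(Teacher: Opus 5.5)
Your proposal is correct and follows essentially the same route as the paper. The necessity direction uses Theorem~\ref{thm:triangle} together with \eqref{eq:regular-identities}. The sufficiency direction directly verifies $ac+4=(a+r)^2$ and $bc+4=(b+r)^2$ for $c=a+b+2r$, and uniqueness comes from recovering $a=s-r$, $b=t-r$ by regularity.
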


\begin{proof}
Let $(r,s,t)$ be the side-length triple of the triangle associated with the $D(4)$-triple $\{a,b,c\}$. By Theorem~\ref{thm:triangle}, the
$D(4)$-triple is regular and $b-a<r$. Hence
\eqref{eq:regular-identities} holds, so $s=r+a, t=r+b.$
From \eqref{eq:lengths}, we also have $ab=r^2-4$.
Thus \eqref{eq:factor-param} and \eqref{eq:factor-pairs} hold.

Conversely, let $a,b$ satisfy \eqref{eq:factor-pairs} and put $c=a+b+2r$. Then $0<a<b<c,$ and
$$
ab+4=r^2,
\qquad
ac+4=(a+r)^2,
\qquad
bc+4=(b+r)^2.
$$
Hence $\{a,b,c\}$ is a regular $D(4)$-triple, and its corresponding
values in \eqref{eq:lengths} are
$ r, r+a, r+b$.
Since $b-a<r$, Theorem~\ref{thm:triangle} shows that these are the side lengths of the triangle associated with the $D(4)$-triple
$\{a,b,c\}$.
Finally, any $D(4)$-triple with side-length triple $(r,s,t)$ must be
regular by Theorem~\ref{thm:triangle}. Therefore $a=s-r, b=t-r,$ and $c=a+b+2r=s+t$. 
\end{proof}

\begin{corollary}
\label{cor:angle-type}
Let $\{a,b,c\}$, $a<b<c$, be a $D(4)$-triple, and let $r,s,t$ be
defined by \eqref{eq:lengths}. Suppose that $r,s,t$ are the side lengths
of a non-degenerate triangle.
Then
\begin{equation}
r^2+s^2-t^2
=r^2-(b-a)(2r+a+b)
\label{eq:anglecriterion}
\end{equation}
and exactly one of the following three cases holds for the triangle associated with the $D(4)$-triple $\{a,b,c\}$: 
\begin{enumerate}[(1)]
\item it is acute if and only if
$$
r^2>(b-a)(2r+a+b).
$$
\item it is right if and only if
$$
r^2=(b-a)(2r+a+b).
$$
\item it is obtuse if and only if
$$
r^2<(b-a)(2r+a+b).
$$
\end{enumerate}
\end{corollary}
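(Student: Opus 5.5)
By Theorem~\ref{thm:triangle}, the hypothesis that $r,s,t$ form a non-degenerate triangle forces $\{a,b,c\}$ to be regular with $r>b-a$. The plan is to use the regular identities \eqref{eq:regular-identities}, $s=a+r$ and $t=b+r$, to compute $r^2+s^2-t^2$ directly. Expanding gives
$$
s^2-t^2=(a+r)^2-(b+r)^2=(a-b)(a+b+2r)=-(b-a)(2r+a+b),
$$
and adding $r^2$ yields \eqref{eq:anglecriterion} at once.

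For the trichotomy, the key observation is that $t$ is the longest side, since $r<s<t$. So the angle opposite $t$ is the largest angle of the triangle. By the law of cosines,
$$
\cos\gamma=\frac{r^2+s^2-t^2}{2rs},
$$
where $\gamma$ is the angle opposite $t$, and $2rs>0$. Hence the sign of $\cos\gamma$ equals the sign of $r^2+s^2-t^2$.

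The triangle is acute exactly when its largest angle is acute, right exactly when that angle equals $\pi/2$, and obtuse exactly when it exceeds $\pi/2$. These correspond to $r^2+s^2-t^2$ being positive, zero, or negative. Substituting \eqref{eq:anglecriterion} turns these conditions into the three stated comparisons of $r^2$ with $(b-a)(2r+a+b)$. The three cases are mutually exclusive and exhaustive by trichotomy of real numbers, which gives the ``exactly one'' claim.

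There is no real obstacle here. The only point needing care is to justify that classifying the largest angle suffices to classify the triangle. This holds because a triangle has at most one non-acute angle, and any non-acute angle must lie opposite the longest side.
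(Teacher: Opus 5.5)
Your proof is correct and follows the same route as the paper. You invoke Theorem~\ref{thm:triangle} to get regularity, use $s=a+r$ and $t=b+r$ to expand $r^2+s^2-t^2$, and read off the angle type from its sign, since $t$ is the longest side. Your law-of-cosines remark only spells out what the paper states in one line.
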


\begin{proof}
By Theorem~\ref{thm:triangle}, the $D(4)$-triple is regular. Hence
\eqref{eq:regular-identities} holds, that is,
$$
s=a+r,\qquad t=b+r.
$$
Therefore
$$
\begin{aligned}
r^2+s^2-t^2
&=r^2+(r+a)^2-(r+b)^2\\
&=r^2-(b-a)(2r+a+b).
\end{aligned}
$$
Since $t$ is the longest side, the three cases follow from the sign of
$r^2+s^2-t^2$.
\end{proof}

We next use the factorisation in Corollary~\ref{cor:factor-param} to
count the triangles with a prescribed shortest side.

\begin{corollary}
\label{cor:count-shortest}
Let $r\ge4$ be a fixed integer. The number of pairwise non-congruent triangles associated with $D(4)$-triples and having shortest side $r$ equals the number of positive divisors $a$ of $r^2-4$ satisfying
\begin{equation}
\frac{\sqrt{5r^2-16}-r}{2}
<a<
\sqrt{r^2-4}.
\label{eq:count-shortest}
\end{equation}
\end{corollary}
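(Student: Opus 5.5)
We use the parametrisation of Corollary~\ref{cor:factor-param}. That corollary shows that the side-length triples $(r,s,t)$ with fixed shortest side $r\ge4$ are exactly $(r,r+a,r+b)$, where $(a,b)$ runs over the pairs satisfying \eqref{eq:factor-pairs}. Distinct pairs $(a,b)$ give distinct triples $(r,s,t)$, since $a=s-r$ and $b=t-r$. Two integer triangles are congruent exactly when their sorted side lengths coincide. Hence the number of pairwise non-congruent triangles equals the number of pairs $(a,b)$ satisfying \eqref{eq:factor-pairs}. Since $b=(r^2-4)/a$ is determined by $a$, this is the number of positive divisors $a$ of $r^2-4$ such that, with $b=(r^2-4)/a$, we have $a<b$ and $b-a<r$.

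It remains to translate these two conditions into \eqref{eq:count-shortest}.

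\emph{Condition $a<b$.} It is equivalent to $a^2<ab=r^2-4$, that is, $a<\sqrt{r^2-4}$, because $a>0$.

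\emph{Condition $b-a<r$.} Multiplying by $a>0$, it becomes $r^2-4-a^2<ra$, that is,
\[
a^2+ra-(r^2-4)>0 .
\]
The quadratic $x^2+rx-(r^2-4)$ has roots
\[
\frac{-r\pm\sqrt{r^2+4(r^2-4)}}{2}=\frac{-r\pm\sqrt{5r^2-16}}{2}.
\]
Its smaller root is negative. Since $a>0$, the inequality is therefore equivalent to
\[
a>\frac{\sqrt{5r^2-16}-r}{2}.
\]

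Combining the two conditions gives exactly \eqref{eq:count-shortest}. The only care needed is the equivalence of strict inequalities after multiplying by $a$ and choosing the correct root, which is routine.
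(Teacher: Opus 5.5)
Your proof is correct and is essentially the paper's argument. Both use Corollary~\ref{cor:factor-param} to reduce to factor pairs $r^2-4=ab$ with $a<b$ and $b-a<r$, rewrite these two conditions as the bounds in \eqref{eq:count-shortest} via the quadratic $a^2+ra-(r^2-4)>0$, and note that distinct divisors $a$ give distinct side-length triples, hence non-congruent triangles.
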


\begin{proof}
By Corollary~\ref{cor:factor-param}, the associated triangles with shortest side $r$ correspond to factor pairs $r^2-4=ab$ satisfying $0<a<b$ and
$b-a<r$. Since $b=(r^2-4)/a$, the condition $a<b$ is equivalent to
$a<\sqrt{r^2-4}$. The condition $b-a<r$ is equivalent to
$a^2+ra-(r^2-4)>0$, and hence to $a>(\sqrt{5r^2-16}-r)/2$.
Distinct admissible divisors $a$ give distinct ordered side-length
triples $\left(r,r+a,r+\frac{r^2-4}{a}\right),$
and hence distinct congruence classes.
\end{proof}

Table~\ref{tab:examples} lists the triangles associated with
$D(4)$-triples for $4\le r\le20$.

\begin{table}[H]
\centering
\small
\caption{Triangles associated with $D(4)$-triples for
$4\le r\le20$.}
\label{tab:examples}
\begin{tabular}{cccc}
\toprule
$r$ & $D(4)$-triple & triangle side lengths & angle type \\
\midrule
4 & $\{3,4,15\}$ & $(4,7,8)$ & acute \\
5 & $\{3,7,20\}$ & $(5,8,12)$ & obtuse \\
6 & $\{4,8,24\}$ & $(6,10,14)$ & obtuse \\
7 & $\{5,9,28\}$ & $(7,12,16)$ & obtuse \\
8 & $\{5,12,33\}$ & $(8,13,20)$ & obtuse \\
  & $\{6,10,32\}$ & $(8,14,18)$ & obtuse \\
9 & $\{7,11,36\}$ & $(9,16,20)$ & obtuse \\
10 & $\{8,12,40\}$ & $(10,18,22)$ & obtuse \\
11 & $\{9,13,44\}$ & $(11,20,24)$ & obtuse \\
12 & $\{10,14,48\}$ & $(12,22,26)$ & obtuse \\
13 & $\{11,15,52\}$ & $(13,24,28)$ & obtuse \\
14 & $\{12,16,56\}$ & $(14,26,30)$ & obtuse \\
15 & $\{13,17,60\}$ & $(15,28,32)$ & obtuse \\
16 & $\{12,21,65\}$ & $(16,28,37)$ & obtuse \\
   & $\{14,18,64\}$ & $(16,30,34)$ & right \\
17 & $\{15,19,68\}$ & $(17,32,36)$ & acute \\
18 & $\{16,20,72\}$ & $(18,34,38)$ & acute \\
19 & $\{17,21,76\}$ & $(19,36,40)$ & acute \\
20 & $\{18,22,80\}$ & $(20,38,42)$ & acute \\
\bottomrule
\end{tabular}
\end{table}

For $r=8$, we have $r^2-4=60$. The six positive factor pairs $ab=60$ with $a<b$ determine the points $(s,t)=(8+a,8+b)$ such that $(s-8)(t-8)=60$. They are listed in Table~\ref{tab:r8-factors}. Only the pairs $(5,12)$ and $(6,10)$ satisfy $b-a<8$, and therefore give the triangles $(8,13,20)$ and $(8,14,18)$ associated with the corresponding $D(4)$-triples.

\begin{table}[H]
\centering
\caption{Factor pairs of $60$ and the corresponding points $(s,t)$ for $r=8$.}
\label{tab:r8-factors}
\begin{tabular}{cccc}
\toprule
$(a,b)$ & $(s,t)$ & $b-a<8$ & Associated triangle \\
\midrule
$(1,60)$ & $(9,68)$  & no  & -- \\
$(2,30)$ & $(10,38)$ & no  & -- \\
$(3,20)$ & $(11,28)$ & no  & -- \\
$(4,15)$ & $(12,23)$ & no  & -- \\
$(5,12)$ & $(13,20)$ & yes & $(8,13,20)$ \\
$(6,10)$ & $(14,18)$ & yes & $(8,14,18)$ \\
\bottomrule
\end{tabular}
\end{table}

\subsection{Similarity of triangles associated with $D(4)$-triples}
\begin{prop}\label{prop:similarity}
Let $\{a,b,c\}$ and $\{a',b',c'\}$ be $D(4)$-triples, and let the
triangles associated with these $D(4)$-triples have side-length triples $(r,s,t)$ and $(r',s',t')$, respectively. If the two triangles are similar, then they are congruent.
\end{prop}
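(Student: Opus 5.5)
The plan is to reduce similarity to a single scaling factor and then use the identity of Lemma~\ref{lem:associated-identities}, whose constant term $-4$ does not scale. By Theorem~\ref{thm:triangle}, both triples $\{a,b,c\}$ and $\{a',b',c'\}$ are regular. By \eqref{eq:regular-identities} and Corollary~\ref{cor:factor-param}, we then have
$$
(r,s,t)=(r,r+a,r+b),\qquad (r',s',t')=(r',r'+a',r'+b'),
$$
with $ab=r^2-4$ and $a'b'=r'^2-4$.

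First, I will fix the correspondence of sides. Since $r<s<t$ and $r'<s'<t'$, and a similarity preserves the ordering of side lengths, similarity means there is a real $\lambda>0$ with
$$
r'=\lambda r,\qquad s'=\lambda s,\qquad t'=\lambda t.
$$
Subtracting gives $s'-r'=\lambda(s-r)$ and $t'-r'=\lambda(t-r)$, that is, $a'=\lambda a$ and $b'=\lambda b$.

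Next, I will apply \eqref{eq:factorised} to both triangles:
$$
r'^2-4=(s'-r')(t'-r')=\lambda^2(s-r)(t-r)=\lambda^2(r^2-4).
$$
The left-hand side also equals $\lambda^2r^2-4$. Comparing the two expressions gives $4\lambda^2=4$, so $\lambda=1$ because $\lambda>0$. Hence $(r',s',t')=(r,s,t)$, and the triangles are congruent. As a consequence of the uniqueness in Corollary~\ref{cor:factor-param}, the triples themselves coincide as well.

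I do not expect a real obstacle. The only point needing care is the first step: the side correspondence under similarity must be the order-preserving one, which follows because both side-length triples are strictly increasing. Everything after that is the one-line scaling comparison.
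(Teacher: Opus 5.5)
Your proposal is correct and matches the paper's proof essentially step for step. You use the order-preserving scaling $(r',s',t')=(\lambda r,\lambda s,\lambda t)$, then compare $\lambda^2(r^2-4)$ with $\lambda^2r^2-4$ via \eqref{eq:factorised} to force $\lambda=1$; your extra remark that the triples coincide is also valid, by the uniqueness part of Corollary~\ref{cor:factor-param}.
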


\begin{proof}
Since $r<s<t$ and $r'<s'<t'$, similarity gives $(r',s',t')=(\lambda r,\lambda s,\lambda t)$ for some $\lambda>0$. Equation~\eqref{eq:factorised} holds for both triangles. Hence
$$
\lambda^2(s-r)(t-r)
=(s'-r')(t'-r')
=r'^2-4
=\lambda^2r^2-4.
$$
Since $(s-r)(t-r)=r^2-4$, we obtain
$\lambda^2(r^2-4)=\lambda^2r^2-4$, and therefore $\lambda^2=1$.
Thus $\lambda=1$, so the two triangles are congruent.
\end{proof}

\subsection{The family of $D(4)$-triples $\{r-2,r+2,4r\}$}

For $r\ge5$, consider the factor pair
$a=r-2$ and $b=r+2$. Then $ab=r^2-4$ and $b-a=4<r$. Hence Corollary~\ref{cor:factor-param} gives the $D(4)$-triple $\{r-2,r+2,4r\}$, and the triangle associated with this $D(4)$-triple
has side lengths
\begin{equation}
(r,2r-2,2r+2).
\label{eq:fujita-lengths}
\end{equation}

For the triangle associated with the $D(4)$-triple
$\{r-2,r+2,4r\}$, we have $r^2+(2r-2)^2-(2r+2)^2=r(r-16)$. Therefore it is obtuse for $5\le r\le15$, right for $r=16$, and acute for $r\ge17$.

For the triangles associated with the $D(4)$-triples
$\{r-2,r+2,4r\}$, Heron's formula gives
\begin{equation}
16A^2=15r^2(r^2-16).
\label{eq:fujita-area}
\end{equation}

\begin{prop}\label{prop:heronian}
Let $r\ge5$ be an integer. The triangle associated with the $D(4)$-triple $\{r-2,r+2,4r\}$, with side lengths $(r,2r-2,2r+2)$, is Heronian if and only if $r=r_n$ for some $n\ge1$, where
\begin{equation}
r_0=4,\qquad
r_1=16,\qquad
r_{n+1}=8r_n-r_{n-1}\quad(n\ge1).
\label{eq:rrecurrence}
\end{equation}
\end{prop}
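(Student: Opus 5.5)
The plan is to start from the area formula \eqref{eq:fujita-area}, $16A^2=15r^2(r^2-16)$, and reduce integrality of $A$ to a Pell equation. Note that $A=\tfrac{r}{4}\sqrt{15(r^2-16)}$. If $A$ is an integer, then $15(r^2-16)$ is the square of a rational number, hence of an integer. Since $15$ is squarefree, this forces $r^2-16=15y^2$ for some integer $y\ge1$ (as $r\ge5$). Then $A=\tfrac{15ry}{4}$, so we also need $4\mid 15ry$. The first step is thus to establish that $A\in\mathbb Z$ if and only if $r^2-15y^2=16$ with $y\ge1$ and $4\mid ry$.

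The second step handles the parity condition. Suppose $r^2-15y^2=16$. Modulo $2$, $r\equiv y$. If both are odd, then $r^2-15y^2\equiv 1-15\equiv 2\pmod 8$, which contradicts $16\equiv 0$. Hence both are even, $r=2r'$, $y=2y'$ with $r'^2-15y'^2=4$. Now $4\mid ry=4r'y'$ holds automatically. So the Heronian condition is exactly $r'^2-15y'^2=4$ with $r'\ge3$ (equivalently $y'\ge1$). Going one step further: if $r',y'$ are both odd, then $r'^2-15y'^2\equiv 1-15\equiv 2\pmod 8$, again impossible. So $r'=2X$, $y'=2Y$, giving $X^2-15Y^2=1$ with $r=4X$.

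The third step is to invoke the standard theory of $X^2-15Y^2=1$. The fundamental solution is $(4,1)$, and all positive solutions are $X_n+Y_n\sqrt{15}=(4+\sqrt{15})^n$. The sequence $X_n$ satisfies $X_0=1$, $X_1=4$, $X_{n+1}=8X_n-X_{n-1}$, since $4+\sqrt{15}$ has minimal polynomial $x^2-8x+1$. Hence $r=4X_n$ satisfies $r_0=4$, $r_1=16$, $r_{n+1}=8r_n-r_{n-1}$, matching \eqref{eq:rrecurrence}. The condition $r\ge5$ excludes only $n=0$ ($Y=0$, $r=4$). Conversely, for $r=r_n$ with $n\ge1$ we have $r^2-16=16(X_n^2-1)=240Y_n^2$. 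This gives $A=\tfrac{r}{4}\cdot 60Y_n=15rY_n$, an integer, which proves the converse direction.

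The main obstacle is the justification, in the first step, that a rational square root of $15(r^2-16)$ forces $r^2-16$ to be exactly $15$ times a square. This follows by writing $r^2-16=m^2k$ with $k$ squarefree and noting that $15k$ must then be a square, so $k=15$. The rest consists of routine congruences modulo $8$ and the classical description of Pell solutions, which I will cite as standard.
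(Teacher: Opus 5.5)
Your proposal is correct and follows essentially the paper's route: you reduce integrality of $A$ via congruences to $U^2-15V^2=1$ with $r=4U$, then read off the recurrence from $(4+\sqrt{15})^n$ and the polynomial $x^2-8x+1$. The differences are minor. You extract the factor $15$ by squarefreeness and then argue mod $8$ twice, whereas the paper reduces $m^2-15r^2=-240$ mod $15$ and mod $16$. You also cite the standard theory of Pell equations, whereas the paper proves it in a self-contained way via the descent $(U,V)\mapsto(4U-15V,\,4V-U)$.
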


\begin{proof}
By \eqref{eq:fujita-area}, the area of the triangle with side lengths $(r,2r-2,2r+2)$ is
$$
A=\frac{r}{4}\sqrt{15(r^2-16)}.
$$
Suppose first that $A$ is an integer. Then $\sqrt{15(r^2-16)}=4A/r$ is rational. Since
$15(r^2-16)$ is an integer, it follows that
$\sqrt{15(r^2-16)}$ is an integer. Let $m=\sqrt{15(r^2-16)}$. Then
\begin{equation}
m^2-15r^2=-240.
\label{eq:pellm}
\end{equation}
Reducing equation \eqref{eq:pellm} modulo $15$ gives $m^2\equiv0\pmod{15}$.
Hence $3\mid m^2$ and $5\mid m^2$, so
$3\mid m$ and $5\mid m$. Therefore $15\mid m$.
Reducing \eqref{eq:pellm} modulo $16$, gives
$
m^2+r^2\equiv0\pmod{16}.
$
Since the quadratic residues modulo $16$ are $0,1,4,$ and $9$, this is
possible only if
$
m^2\equiv r^2\equiv0\pmod{16}.
$
Thus $4\mid m$ and $4\mid r$. Since $\gcd(15,4)=1$, we have $60\mid m$. Hence we may write
$m=60V$ and $r=4U$ for some positive integers $U,V$. Substitution
in \eqref{eq:pellm} gives
\begin{equation}
U^2-15V^2=1.
\label{eq:pell15}
\end{equation}

Conversely, let $(U,V)$ be a positive integer solution of
\eqref{eq:pell15} and put $r=4U$. Then
$15(r^2-16)=240(U^2-1)=(60V)^2$, and therefore
$A=60UV$ is an integer. Thus, for $r\ge5$, the triangle is Heronian
if and only if $r=4U$ for a positive integer solution $(U,V)$ of
\eqref{eq:pell15}.

We now describe all positive integer solutions of \eqref{eq:pell15}.
Let $(U,V)$ be one of them. A direct calculation gives
$$
(4U-15V)^2-15(4V-U)^2=U^2-15V^2=1,
$$
so $(4U-15V,4V-U)$ is again an integer solution of
\eqref{eq:pell15}. Since $U^2=15V^2+1$ and $V\ge1$, we have
$\sqrt{15}\,V<U\le4V$. As $\sqrt{15}>15/4$, it follows that
$4U-15V>0$. Moreover, $U\le4V$ gives $4V-U\ge0$, while
$U>\sqrt{15}\,V>3V$ gives
$
0\le4V-U<V.
$
Hence repeated application of this reduction takes every positive
solution of \eqref{eq:pell15} to $(1,0)$ in finitely many steps.
Reversing the reduction, every nonnegative solution is therefore obtained
from $(1,0)$ by repeatedly applying
$(U,V)\mapsto(4U+15V,U+4V)$. Writing the solution obtained after
$n$ steps as $(U_n,V_n)$, we have
$
U_n+V_n\sqrt{15}=(4+\sqrt{15})^n, n\ge0.
$
Since $4+\sqrt{15}$ and $4-\sqrt{15}$ are the roots of
$x^2-8x+1=0$, the sequence $(U_n)$ satisfies
$U_{n+1}=8U_n-U_{n-1}$, with $U_0=1$ and $U_1=4$. Hence
$r_n=4U_n$ satisfies \eqref{eq:rrecurrence}.

For $n=0$, we have $r_0=4$, which gives the degenerate side-length triple $(4,6,10)$. Therefore, for $r\ge5$, the triangle associated
with the $D(4)$-triple $\{r-2,r+2,4r\}$ is Heronian if and only if
$r=r_n$ for some $n\ge1$.
\end{proof}

The first three Heronian triangles associated with $D(4)$-triples
in the family $\{r-2,r+2,4r\}$ have side lengths $(16,30,34)$,
$(124,246,250)$, and $(976,1950,1954)$, with areas $240$, $14880$,
and $922320$, respectively.

\begin{remark}
For $n\ge1$, the longest side lengths $2r_n+2$ of the Heronian
triangles associated with the $D(4)$-triples in
Proposition~\ref{prop:heronian} are the terms $a(n+1)$ of OEIS entry A272365. Since $r_n$ satisfies a recurrence
with characteristic polynomial $x^2-8x+1$, the sequence $2r_n+2$
satisfies the recurrence with characteristic polynomial
$(x-1)(x^2-8x+1)=x^3-9x^2+9x-1$, that is,
$$
a(n)=9a(n-1)-9a(n-2)+a(n-3),
$$
which is the recurrence given in \cite{Jackson2016}.
Proposition~\ref{prop:heronian} shows that every Heronian triangle
associated with a $D(4)$-triple in the family $\{r-2,r+2,4r\}$
has its longest side length in this sequence.
\end{remark}

\section{Conclusions}
We have shown that the lengths associated with a $D(4)$-triple form a non-degenerate Euclidean triangle precisely when the triple is regular and $r>b-a$. This characterisation reduces the study of triangles associated with $D(4)$-triples and having fixed shortest side $r$ to suitable factorisations of $r^2-4$, yielding their enumeration and angle classification. It also implies that two similar triangles associated with $D(4)$-triples
are necessarily congruent, while for the family of $D(4)$-triples
$\{r-2,r+2,4r\}$ the Heronian condition reduces to a Pell equation. Thus, the geometric properties considered here are determined by the arithmetic structure of the underlying $D(4)$-triple.

\noindent\textbf{Acknowledgements.}
Authors were supported by the University of Split, grant no.\@ IP-UNIST-44, funded by the European Union -- NextGenerationEU.

\end{document}